\newtheorem{theorem}{Theorem}[section]
\newtheorem{proposition}[theorem]{Proposition}
\newtheorem{corollary}[theorem]{Corollary}
\newtheorem{remark}[theorem]{Remark}
\newtheorem{problem}[theorem]{Problem}
\newtheorem*{theorem1*}{Theorem}
\newtheorem*{problem1*}{Problem}
\newtheorem*{conjecture1*}{Conjecture 1}
\newcommand{\oto}{\longleftrightarrow}
\newcommand{\dpt}{\mathtt{dpt}}
\newcommand{\ang}{\mathtt{ang}}
\newcommand{\ls}{\mathtt{ls}}
\newcommand{\hl}{\mathtt{hl}}
\newcommand{\alt}{\mathtt{alt}}
\newcommand{\ax}{\mathtt{ax}}
\newcommand{\T}{\mathbf{T}}
\newcommand{\Sh}{\boldsymbol{\Sigma}}
\begin{document}

\title{\large{\textbf{
PERFECT MATCHING INDEX\\ VS.\\ CIRCULAR FLOW NUMBER OF A CUBIC GRAPH}}}

\author{
Edita M\'a\v cajov\' a and Martin \v{S}koviera\\[3mm]
Department of Computer Science\\
Faculty of Mathematics, Physics and Informatics\\
Comenius University\\
842 48 Bratislava, Slovakia\\[2mm]
{\small\tt macajova@dcs.fmph.uniba.sk}\\[-1mm]
{\small\tt skoviera@dcs.fmph.uniba.sk}}

\date{\today}

\maketitle

\begin{abstract}
The perfect matching index of a cubic graph $G$, denoted by
$\pi(G)$,  is the smallest number of perfect matchings that
cover all the edges of $G$. According to the Berge-Fulkerson
conjecture, $\pi(G)\le5$ for every bridgeless cubic graph~$G$.
The class of graphs with $\pi\ge 5$ is of particular interest
as many conjectures and open problems, including the famous
cycle double cover conjecture, can be reduced to it. Although
nontrivial examples of such graphs are very difficult to find,
a few infinite families are known, all with circular flow
number $\Phi_c(G)=5$. It has been therefore suggested
[Electron. J. Combin. 23 (2016), $\#$P3.54] that $\pi(G)\ge 5$
might imply $\Phi_c(G)\ge 5$. In this article we dispel these
hopes and present a family of cyclically $4$-edge-connected
cubic graphs of girth at least $5$ (snarks) with $\pi\ge 5$ and
$\Phi_c\le 4+\frac23$.

\bigskip\noindent\textbf{Keywords:}
cubic graph, snark, perfect matching, covering, circular flow

\bigskip\noindent\textbf{AMS subject classifications:}
05C21, 05C70, 05C15.
\end{abstract}

\section{Introduction}

Cubic graphs that cannot be covered with four perfect matchings
have recently attracted considerable attention. The reason for
this interest stems from their close relationship to several
difficult and long-standing conjectures such as the cycle
double conjecture, the Berge-Fullkerson conjecture, and others.
The current knowledge about these graphs is very limited and
examples are extremely rare.

It is well known that every bridgeless cubic graph admits a set
of perfect matchings that cover all its edges (see~\cite{Sch}).
The smallest number of perfect matchings for such a cover is
the \emph{perfect matching index} and is denoted by $\pi(G)$.
Obviously, $\pi(G)\ge 3$ for every bridgeless cubic graph $G$,
with equality attained precisely when the graph is
$3$-edge-colourable. Although no constant upper bound is known,
the Berge-Fulkerson conjecture (see \cite{S}) suggests that
this number should not exceed $5$.

\begin{figure}[h!]
\begin{center}
\includegraphics[width=.4\linewidth]{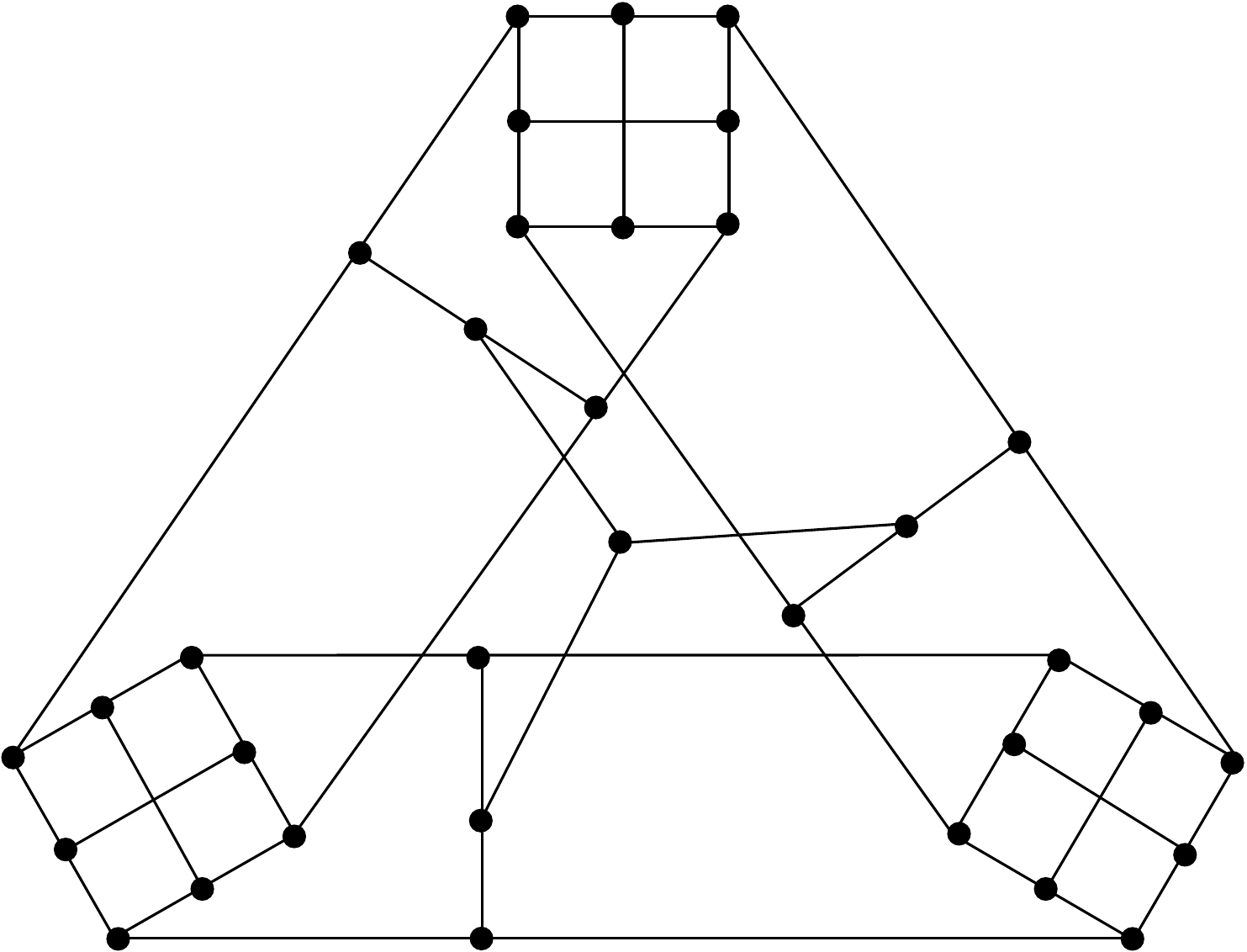}
\caption{A graph of order $34$ that cannot be covered with four perfect
matchings}\label{fig:g34}
\end{center}
\end{figure}

Nontrivial cubic graphs with perfect matching index greater
than 4 are very difficult to find. In fact, until 2013, only
one cyclically $4$-edge-connected cubic graph with $\pi\ge 5$
was known -- of course, the Petersen graph. The situation
changed after the exhaustive computer search performed by
Brinkmann et al. \cite{BGHM} revealed another such graph on 34
vertices (see Figure~\ref{fig:g34}). This graph became a
starting point for the construction of three infinite families
of graphs with this property, the windmill graphs of Esperet
and Mazzuoccolo \cite{EM}, the treelike snarks of Abreu et al.
\cite{AKLM}, and a family of Chen \cite{Chen} similar to the
windmill graphs. Esperet and Mazzuoccolo \cite{EM} also showed
that it is NP-complete to decide for a bridgeless cubic graph
$G$ whether $\pi(G)\le 4$ or $\pi(G)\ge 5$, implying that the
family of cubic graphs with $\pi\ge 5$ is sufficiently rich.

Somewhat surprisingly, all graphs with $\pi\ge 5$ known so far
have circular flow number at least 5 (see
~\cite[Theorem~9.1]{EMMS_P1}). Recall that the circular flow
number of a bridgeless graph $G$, denoted by $\Phi_c(G)$, is
the smallest rational number $r$ such that $G$ admits a
nowhere-zero $r$-flow. With similar reasons in mind, Abreu at
al.~\cite{AKLM} and Fiol at al.~\cite{FMS} suggested that cubic
graphs critical with respect to perfect matching index
(corresponding to Berge's conjecture) might be also critical
with respect to circular flow number (corresponding to Tutte's
$5$-flow conjecture). In other words, perfect matching index at
least 5 ought to imply circular flow number being at least 5.

In this paper we dispel these expectations and exhibit the
first family of cyclically $4$-edge-connected cubic graphs of
girth at least $5$ (nontrivial snarks) with $\pi\ge 5$ and
$\Phi_c<5$. In fact, we provide an infinite family of
nontrivial snarks for which  $4+\frac12 < \Phi_c\le 4+\frac23$.

Our construction heavily depends on the results of
\cite{EMMS_P1}. In that paper we have developed a theory that
describes coverings with four perfect matchings as flows whose
flow values represent points and outflow patterns represent
lines of a tetrahedron in the 3-dimensional projective space
$PG(3,2)$ over the 2-element field. The geometric
representation of coverings can be used as a powerful tool for
the study of graphs that cannot be covered with four perfect
matchings and enables a great variety of constructions of such
graphs. The main ideas of this theory are reviewed in
Section~\ref{sec:theory}, making the present article
sufficiently self-contained.

\section{Preliminaries}\label{sec:prelim}

Graphs studied in this paper will be often assembled from
smaller building blocks called multipoles. Similarly to graphs,
each \textit{multipole} $M$ has its vertex set $V(M)$, its edge
set $E(M)$, and an incidence relation between vertices and
edges. Each edge of $M$ has two ends, at least one of which is
incident with a vertex. An edge whose one end is
incident with a vertex and the other end is free is called a
\textit{dangling edge}. Free or isolated edges thus do not
occur in this paper.

A multipole with $k$ dangling edges is called a
\textit{$k$-pole}. A \textit{dipole} is a multipole whose
dangling edges are partitioned into two sets of equal size,
called \textit{connectors}. If the size is~$m$, the dipole is
an \textit{$(m,m)$-pole}.  One of the connectors of a dipole is
chosen as its \emph{input connector}; the other connector is
its \textit{output connector}. In order to avoid ambiguity,
connectors of dipoles are endowed with a fixed (but arbitrary)
linear order. All multipoles in this paper are cubic, which
means that each vertex is incident with exactly three
edge-ends.

Free ends of any two dangling edges $s$ and $t$ can be
coalesced to produce a new edge $s*t$, the \textit{junction} of
$s$ and $t$, whose end-vertices are the other end of $s$ and
the other end of $t$.

Given an $(m,m)$-pole $M_1$ and an  $(m,m)$-pole $M_2$, we can
construct a new $(m,m)$-pole $M_1\circ M_2$, the
\textit{composition} of  $M_1$ and $M_2$, by performing the
junction of the $i$-th edge of output connector of $M_1$ with
the $i$-th edge of the input connector of $M_2$. The input and
the output connectors of $M_1\circ M_2$ are inherited from
$M_1$ and $M_2$, respectively. Composition of dipoles is
clearly associative, therefore $(M_1\circ M_2)\circ
M_3=M_1\circ (M_2\circ M_3)$.

An \textit{edge-colouring} of a graph or a multipole $X$ is an
assignment of colours from a set $Z$ of \textit{colours} to the
edges of $X$ in such a way that the edges with adjacent edge
ends receive distinct colours. It means that all edge
colourings in this paper are proper. A $2$-connected cubic
graph whose edges cannot be properly coloured with three
colours is called a \textit{snark}. A snark is
\textit{nontrivial} if it is cyclically $4$-edge-connected and
has girth at least $5$.

Given an abelian group $A$, an \textit{$A$-flow} on a graph $G$
consists of an orientation of $G$ and a function $\phi\colon
E(G)\to A$ such that, at each vertex, the sum of all incoming
values equals the sum of all outgoing ones (\textit{Kirchhoff's
law}). A flow which only uses nonzero elements of the group is
said to be \textit{nowhere-zero}. An \textit{integer $k$-flow},
where $k\ge 2$ is an integer, is a $\mathbb{Z}$-flow with value
range contained in $\{0,\pm1,\ldots,\pm(k-1)\}$.

Finally, we define the \textit{total flow through a dipole $X$}
as the sum of flow-values on the dangling edges of the input
connector directed towards the dipole; of course, this value
coincides with the sum of flow-values on the dangling edges in
the output connector of $X$ directed away from~$X$.

\section{Tetrahedral flows}\label{sec:theory}

Consider a cubic graph which has a covering
$\mathcal{M}=\{P_1,P_2,P_3,P_4\}$ of its edge set with four
perfect matchings. One can clearly represent $\mathcal{M}$ by a
mapping
$$\phi\colon E(G)\to\mathbb{Z}_2^4$$
where the $i$-th coordinate of the value $\phi(e)$ equals
$1\in\mathbb{Z}_2$ whenever the edge $e$ does not belong to the
perfect matching $P_i$. It is not difficult to see that $\phi$
is a nowhere-zero $\mathbb{Z}_2^4$-flow on $G$. It may be a
little less obvious that $\phi$ has an additional geometric
structure which can be conveniently described in terms of
$3$-dimensional projective space over the $2$-element field.
More importantly, this structure proves useful. In this section
we review the main ideas of the theory and refer the reader to
our paper \cite{EMMS_P1} for details.

We start with the necessary geometric definitions. The
\textit{$n$-dimensional projective space}
$PG(n,2)=\mathbb{P}_n(\mathbb{F}_2)$ over the $2$-element field
$\mathbb{F}_2$ is an incidence geometry whose \textit{points}
can be identified with the nonzero vectors of the
$(n+1)$-dimensional vector space $\mathbb{F}_2^{n+1}$ and
\textit{lines} are formed by the triples $\{x,y,z\}$ of points
such that $x+y+z=0$. Recall that $PG(2,2)$ is the Fano plane.
Throughout this paper we will mainly encounter the
$3$-dimensional projective space $PG(3,2)$, which has $15$
points and $35$ lines.

A \textit{tetrahedron} in $PG(3,2)$ is a configuration $T$
consisting of ten points and six lines spanned by a set
$\{p_1,p_2,p_3,p_4\}$ of four points of $PG(3,2)$ in general
position; the latter means that the set constitutes a basis of
the vector space $\mathbb{F}_2^{4}$. These four points are the
\textit{corner points} of $T$. Every pair of distinct corner
points $c_1$ and $c_2$ belongs to a unique line $\{c_1, c_2,
c_1+c_2\}$ in $T$ whose third point $c_1+c_2$ is its
\textit{midpoint}. Every point $x$ of $T$ is assigned its
\textit{weight}, which equals $1$ if $x$ is a corner point and
$2$ if $x$ is a midpoint.

Any two distinct points of $T$ lie on the same line of
$PG(3,2)$ but not necessarily on a line of $T$. Those that lie
on the same line of $T$ are \textit{collinear} in~$T$,
otherwise they are \textit{non-collinear} in $T$.

\begin{figure}[htbp]
\begin{center}
     \scalebox{0.43}
     {
       \input{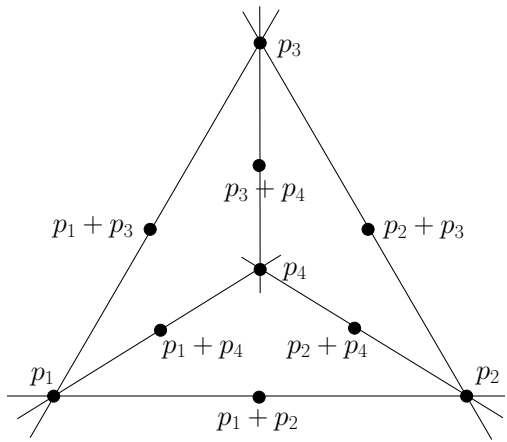}
     }
\end{center}
\caption{The tetrahedron in $PG(3,2)$ spanned by points
$p_1$, $p_2$, $p_3$, and $p_4$}\label{fig:tetra}
\end{figure}

For a given a tetrahedron $T$ in $PG(3,2)$ we define a
\textit{$T$-flow} on a cubic graph $G$ to be a mapping
$\phi\colon E(G)\to P(T)$ from the edge set of $G$ to the point
set of $T$ such that for each vertex $v$ of $G$ the three edges
$e_1$, $e_2$, and $e_3$ incident with $v$ receive values that
form a line of $T$; that is, $\phi(e_1)+\phi(e_2+\phi(e_3)=0$.
The last equation actually states that $\phi$ fulfils the
Kirchhoff law, so a $T$-flow is indeed a flow. A
\textit{tetrahedral flow} on $G$ is a $T$-flow for some
tetrahedron $T$ in $PG(3,2)$. Note that any $T$-flow is also a
proper edge-colouring.

\medskip

The following result is a cornerstone of our theory.

\begin{theorem}\label{thm:1-1}
A cubic graph can have its edges covered with four perfect
matchings if and only if it admits a tetrahedral flow.
Moreover, there exists a one-to-one correspondence between
coverings of $G$ with four perfect matchings and $T$-flows,
where $T$ is an arbitrary fixed tetrahedron in $PG(3,2)$.
\end{theorem}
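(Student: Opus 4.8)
The plan is to prove the statement for one conveniently chosen tetrahedron and then transfer it to an arbitrary one by symmetry. Write $e_1,\dots,e_4$ for the standard basis of $\mathbb{Z}_2^4$, put $\mathbf{1}=e_1+e_2+e_3+e_4$, and let $p_i=\mathbf{1}+e_i$ be the four vectors of Hamming weight $3$. A quick check that every nonempty subset of $\{p_1,\dots,p_4\}$ has nonzero sum shows that these four vectors form a basis, hence span a tetrahedron $T_0$ in $PG(3,2)$; its midpoints are the vectors $p_i+p_j=e_i+e_j$ with $i\ne j$, which are exactly the six weight-$2$ vectors. Thus $P(T_0)$ consists precisely of the vectors of weight $2$ or $3$, and the lines of $T_0$ are exactly the triples $\{p_j,p_k,p_j+p_k\}$ with $j\ne k$ (two corners and their midpoint). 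Since $\mathrm{GL}(4,2)$ is transitive on ordered bases of $\mathbb{Z}_2^4$, an arbitrary tetrahedron $T$ is the image of $T_0$ under a linear automorphism carrying $p_1,\dots,p_4$ to the corners of $T$; such a map preserves lines and the corner/midpoint types, so composition with it converts $T_0$-flows into $T$-flows bijectively. Hence from now on I would fix $T=T_0$.

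For the forward implication I would take a covering $\mathcal{M}=\{P_1,P_2,P_3,P_4\}$ and define $\phi\colon E(G)\to\mathbb{Z}_2^4$ by $\phi(e)_i=0$ if $e\in P_i$ and $\phi(e)_i=1$ otherwise, as in the introductory discussion. The first step is a local observation: no edge can belong to three or more matchings of $\mathcal{M}$, since at an endpoint of such an edge the two remaining edges would together lie in at most one matching, leaving one of them uncovered. Because $\mathcal{M}$ is a covering, every edge therefore lies in one or two matchings, so $\phi(e)$ has weight $2$ or $3$; in particular $\phi$ is nowhere-zero and takes values in $P(T_0)$. The second step is the vertex condition. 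At a vertex $v$ with incident edges $e_1,e_2,e_3$, each $P_i$ contains exactly one of them, so summing over $i$ the three edges are covered $4$ times in total; combined with ``one or two each'' this forces exactly two of $\phi(e_1),\phi(e_2),\phi(e_3)$ to have weight $3$, say $p_j$ and $p_k$. Reading off coordinates, exactly one of the three values is $0$ in each coordinate, so they sum to $\mathbf{0}$; hence the third value equals $p_j+p_k$ and the triple is a line of $T_0$. Thus $\phi$ is a $T_0$-flow.

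For the reverse implication I would start from a $T_0$-flow $\phi$ and set $P_i=\{e\in E(G):\phi(e)_i=0\}$. Since $\mathbf{1}\notin P(T_0)$, no edge has all-ones value, so the sets $P_i$ cover $E(G)$. At each vertex the three incident values form a line $p_j,p_k,p_j+p_k$ with $j\ne k$, and a short coordinatewise check (coordinates $j$, $k$, and the two remaining ones) shows that in every coordinate exactly one of the three vectors vanishes; hence $P_i$ selects exactly one edge at each vertex and is a perfect matching. Finally, the assignments $\mathcal{M}\mapsto\phi$ and $\phi\mapsto\mathcal{M}$ are mutually inverse, since both round trips reduce to the tautology $e\in P_i\iff\phi(e)_i=0$, which yields the claimed one-to-one correspondence. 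I expect the only genuine obstacle to be the vertex-local counting that excludes a ``three midpoints'' line and pins down the two-corners-plus-midpoint pattern; everything else is bookkeeping, and the $\mathrm{GL}(4,2)$ reduction takes care of the ``arbitrary tetrahedron'' clause.
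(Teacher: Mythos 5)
This paper does not actually prove Theorem~\ref{thm:1-1}: it is imported from the companion paper \cite{EMMS_P1} (the section explicitly defers all details there), so there is no in-paper proof to compare against line by line. Judged on its own, your argument is correct and self-contained, and it follows what is essentially the canonical route: realise the tetrahedron concretely as the weight-$3$ vectors (corners) and weight-$2$ vectors (midpoints) of $\mathbb{Z}_2^4$, observe that the indicator map $\phi(e)_i=0\iff e\in P_i$ lands in this point set because an edge of a covering lies in one or two matchings (never three), and use the perfect-matching property at a vertex --- exactly one zero per coordinate, hence sum zero --- together with the $(2,1,1)$ multiplicity count to exclude sum-zero triples of three midpoints and force the two-corners-plus-midpoint pattern; the converse direction and the $\mathrm{GL}(4,2)$ transfer to an arbitrary tetrahedron are handled correctly. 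The only point worth tightening is the bijectivity clause: your ``mutually inverse tautology'' is a one-to-one correspondence between $T$-flows and \emph{labelled} (ordered) quadruples $(P_1,P_2,P_3,P_4)$ of perfect matchings covering $E(G)$, equivalently assignments of matchings to the corners of $T$; for unordered coverings a single covering corresponds to several flows (up to $4!$, and repetitions among the $P_i$ can occur for $3$-edge-colourable graphs). This looseness is already present in the theorem statement itself, but you should state explicitly which notion of ``covering'' your bijection uses.
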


A natural way of applying tetrahedral flows to the study of
cubic graphs that cannot be covered with four perfect matchings
is by analysing conflicts of tetrahedral flows on the
components resulting from the removal of an edge-cut from the
graph. If the cut-set has four edges, we can split them into
two pairs which can be regarded as the input and the output
connectors of a dipole, and inspect how pairs of points of a
tetrahedron in $PG(3,2)$ are transformed via a tetrahedral flow
from the input to the output.

Let us fix a tetrahedron $T$ with corner points $p_1$, $p_2$,
$p_3$ and $p_4$. We distinguish between six types of pairs of
points of $T$, distinct or not, which we treat as geometric
shapes.
\begin{itemize}
\item[(i)]  A \textit{line segment} is a pair $\{c_1,c_2\}$
    where $c_1$ and $c_2$ are two distinct corner points
    of~$T$. The set of all line segments of $T$ is denoted
    by $\ls$.

\item[(ii)]  A \textit{half-line} is a pair
    $\{c_1,c_1+c_2\}$  where $c_1$ and $c_2$ are two
    distinct corner points of~$T$. The set of all half-line
    of $T$ is denoted by $\hl$.

\item[(iii)] An \textit{angle} is a pair $\{c_1+c_2,
    c_1+c_3\}$ where $c_1$, $c_2$, and $c_3$ are three
    distinct corner points of $T$. The set of all angles of
    $T$ is denoted by $\ang$.

\item[(iv)] An \textit{altitude} is a pair
    $\{c_1,c_2+c_3\}$ where $c_1$, $c_2$, and $c_3$ are
    three distinct corner points of $T$. The set of all
    altitudes of $T$ is denoted by $\alt$.

\item[(v)] An \textit{axis} is a pair $\{c_1+c_2,c_3+c_4\}$
    where $c_1$, $c_2$, $c_3$, and $c_4$ are all four
    corner points of $T$ in some order. The set of all axes
    of $T$ is denoted by $\ax$.

\item[(vi)] A \textit{double point} is a degenerate pair
    $\{x,x\}$ where $x$ is any point of $T$. The set of all
    degenerate pairs of $T$ is denoted by $\dpt$.
\end{itemize}
The pairs under items (i)-(ii) are \textit{collinear}, those
under (iii)-(v) are \textit{non-collinear}. The degenerate
pairs defined in item (vi) actually occur in two varieties,
depending on whether the point $x$ is a corner point or a
midpoint, but both varieties represent the zero flow through a
connector, and from this point of view the distinction is
irrelevant.

We now define the set of \textit{shapes} to be the set
$$\Sh=\{ \ls,\hl, \ang,\alt, \ax, \dpt\}.$$
It can be shown (see \cite[Theorem~4.1]{EMMS_P1}) that for
every pair of points $\{x,y\}$ of $T$, distinct or not, there
exists a unique element $\mathtt{s}\in\Sh$ such that
$\{x,y\}\in\mathtt{s}$. This element $\mathtt{s}$ is called the
\textit{shape} of $\{x,y\}$.

The next step is to examine which pairs of shapes can occur on
the connectors of a $(2,2)$-pole equipped with a tetrahedral
flow. Consider an arbitrary (2,2)-pole $X=X(I,O)$ with input
connector $I=\{g_1,g_2\}$ and output connector $O=\{h_1,h_2\}$,
and let $T$ be a fixed tetrahedron in $PG(3,2)$. We say that
$X$ \textit{has a transition} $$\{x,y\}\to\{x',y'\}$$ or that
$\{x,y\}\to\{x',y'\}$ is a \textit{transition through} $X$, if
there exists a $T$-flow $\phi$ on $X$ such that
$\{\phi(g_1),\phi(g_2)\}=\{x,y\}$ and
$\{\phi(h_1),\phi(h_2)\}=\{x',y'\}$. If $X$ admits both
transitions $\{x,y\}\to\{x',y'\}$ and $\{x',y'\}\to\{x,y\}$, we
write
$$\{x,y\}\oto\{x',y'\}.$$
Each transition $\{x,y\}\to\{x',y'\}$ through $X$ between point
pairs induces a transition between their shapes. To be more
precise, for elements $\mathtt{s}$ and $\mathtt{t}$ of $\Sh$ we
say that $X$ \textit{has a transition}
$$\mathtt{s}\to\mathtt{t}$$
if $X$ has a transition $\{x,y\}\to\{x',y'\}$ such that
$\mathtt{s}$ is the shape of $\{x,y\}$ and $\mathtt{t}$ is the
shape of $\{x',y'\}$. The set of all transitions through $X$
reduced to their shapes forms a binary relation $\T(X)$ on
$\Sh$.

For convenience, we refer to the symbols $\{x,y\}\to\{x',y'\}$
and $\mathtt{s}\to\mathtt{t}$, with any pair of shapes, as
\textit{transitions} even without any connection to a
particular dipole and a tetrahedral flow. There is no danger of
confusion with transitions \textit{through} a dipole defined
above, which require the existence of a certain flow through
it.

Similarly to dipoles, we can also compose their transition
relations. As expected, transitions $\mathtt{p}\to\mathtt{s}$
and $\mathtt{s}\to\mathtt{t}$ of $(2,2)$-poles $X_1$ and $X_2$,
respectively, give rise to the transition
$\mathtt{p}\to\mathtt{t}$ of $X_1\circ X_2$. Conversely, a
transition $\mathtt{p}\to\mathtt{q}$ through $X_1\circ X_2$
occurs only when there exist transitions
$\mathtt{p}\to\mathtt{s}$ through $X_1$ and
$\mathtt{s}\to\mathtt{t}$ through $X_2$ for a suitable shape
$\mathtt{s}\in\Sh$. These definitions immediately imply that
$\T(X_1\circ X_2)=\T(X_1)\circ\T(X_2)$.

\medskip

The following theorem proved in \cite[Theorem~5.1]{EMMS_P1} is
essentially a consequence of Kirchhoff's law.

\begin{theorem}\label{thm:admiss-trans}
All transitions through an arbitrary $(2,2)$-pole $X$ have the
form $\mathtt{s}\to\mathtt{s}$ for some $\mathtt{s}\in\Sh$
except possibly the transitions $\ls\to\ang$ or $\ang\to\ls$.
\end{theorem}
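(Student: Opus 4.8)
The plan is to derive the entire statement from Kirchhoff's law applied \emph{globally} to the multipole $X$, rather than from any case analysis of individual tetrahedral flows. First I would record that a $T$-flow is in particular a nowhere-zero $\mathbb{Z}_2^4$-flow: the point set $P(T)$ consists of nonzero vectors of $\mathbb{F}_2^4$, and the defining line condition $\phi(e_1)+\phi(e_2)+\phi(e_3)=0$ at a vertex is precisely additive conservation over $\mathbb{Z}_2^4$. Summing these vertex conditions over all vertices of $X$, every internal edge is counted twice and hence contributes $2\phi(e)=0$, while each of the four dangling edges $g_1,g_2,h_1,h_2$ is counted exactly once; this gives
\[
\phi(g_1)+\phi(g_2)=\phi(h_1)+\phi(h_2)
\]
for every $T$-flow $\phi$ on $X$. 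Equivalently, the \emph{sum} $\sigma(\{x,y\}):=x+y\in\mathbb{F}_2^4$ of the pair of flow values carried by a connector is preserved by every transition through $X$.

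Next I would compute $\sigma$ shape by shape. Writing $c_1,c_2,c_3,c_4$ for corner points of $T$ (a basis of $\mathbb{F}_2^4$) and using $x+x=0$: a double point gives $0$; a half-line $\{c_1,c_1+c_2\}$ gives the corner point $c_2$; a line segment $\{c_1,c_2\}$ gives the midpoint $c_1+c_2$; an angle $\{c_1+c_2,c_1+c_3\}$ gives the midpoint $c_2+c_3$; an altitude $\{c_1,c_2+c_3\}$ gives $c_1+c_2+c_3$; and an axis $\{c_1+c_2,c_3+c_4\}$ gives $p_1+p_2+p_3+p_4$. Classifying the $15$ points of $PG(3,2)$ by the number of nonzero coordinates in the basis $\{p_1,p_2,p_3,p_4\}$ yields blocks of sizes $4,6,4,1$, which together with $\{0\}$ are pairwise disjoint; the computation then shows that $\sigma$ maps $\dpt$ into $\{0\}$, $\hl$ into the weight-$1$ block (corner points), $\ls$ and $\ang$ both into the weight-$2$ block (midpoints), $\alt$ into the weight-$3$ block, and $\ax$ into the weight-$4$ block.

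Finally I would combine the two facts: if $X$ has a transition $\mathtt{s}\to\mathtt{t}$, realised by a $T$-flow whose input pair has shape $\mathtt{s}$ and whose output pair has shape $\mathtt{t}$, then $\sigma$ of the input pair equals $\sigma$ of the output pair, so the two pairs lie in the same block of the partition above. Since $\dpt$, $\hl$, $\alt$, $\ax$ each occupy a block alone while only $\ls$ and $\ang$ share a block, this forces $\mathtt{s}=\mathtt{t}$ or $\{\mathtt{s},\mathtt{t}\}=\{\ls,\ang\}$, which is exactly the assertion.

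I do not expect a serious obstacle; once the global conservation law is in hand the rest is bookkeeping. The one point deserving care is the altitude, whose sum $c_1+c_2+c_3$ is \emph{not} itself a point of $T$, so that $\sigma$ genuinely ranges over all of $PG(3,2)$ (plus the zero vector) and the weight-$3$ block must be kept in play. A secondary point worth stating explicitly is that the argument only shows the transitions $\ls\to\ang$ and $\ang\to\ls$ are not \emph{forbidden} by conservation; whether either is actually realised through a given $(2,2)$-pole is a separate question, to be addressed elsewhere.
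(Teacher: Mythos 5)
Your argument is correct and is precisely the route the paper indicates: it states that Theorem~\ref{thm:admiss-trans} is ``essentially a consequence of Kirchhoff's law'' (deferring the details to \cite[Theorem~5.1]{EMMS_P1}), and your invariant $\sigma(\{x,y\})=x+y$ is exactly the total flow through the dipole defined in Section~\ref{sec:prelim}, conserved by the global Kirchhoff sum and classified by its weight in the basis of corner points, with only $\ls$ and $\ang$ sharing the midpoint block. So this matches the intended proof, including your closing caveat that conservation alone does not assert that $\ls\to\ang$ or $\ang\to\ls$ is actually realised.
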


The previous theorem implies that the transition relation
$\T(X)$ of every $(2,2,)$-pole $X$ is contained in the set
\begin{align}\label{eq:A}
\mathcal{A}=\{ &\dpt\to\dpt, \hl\to\hl, \alt\to\alt, \ax\to\ax, \nonumber \\
               &\ang\to\ang, \ang\to\ls, \ls\to\ang, \ls\to\ls \}.
\end{align}
The elements of $\mathcal{A}$ will be called \textit{admissible
transitions}.

Two types of dipoles are of particular interest. A
\textit{decollineator} is a $(2,2)$-pole with no transition
$\{x,y\}\to \{x',y'\}$ such that both $\{x,y\}$ and $\{x',y'\}$
are collinear. Among the admissible transitions only those of
type $\ls\to\ls$ and $\hl\to\hl$ are collinear, therefore every
decollineator $D$ has its transition relation $\T(D)$ contained
in the set
\begin{align}\label{eq:D}
 \mathcal{D}=\{\dpt\to\dpt, \alt\to\alt, \ax\to\ax,
               \ang\to\ang, \ang\to\ls, \ls\to\ang\}.
\end{align}

A \textit{deangulator} is a $(2,2)$-pole with no transition of
the form $\ang\to\ang$. Decollineators and deangulators are
closely related: if $D_1$ and $D_2$ are decollineators and
$U_1$ and $U_2$ are deangulators, then $D_1\circ U_i\circ D_2$
is a decollineator and $U_1\circ D_i\circ U_2$ is a deangulator
for each $i\in\{1,2\}$, see \cite[Proposition~7.3]{EMMS_P1}.

The next theorem (see \cite[Theorem~5.4]{EMMS_P1}) explains the
relationship between decollineators and cubic graphs with
perfect matching index at least $5$.

\begin{theorem}\label{thm:char-decol}
The following two statements are equivalent for an arbitrary
$(2,2)$-pole~$X$.
\begin{itemize}
\item[{\rm (i)}] $X$ is a decollineator, that is, $X$
    admits no collinear transition.

\item[{\rm (ii)}] The cubic graph $G$ created from $X$ by
    adding to $X$ two adjacent vertices and attaching each
    of them to a connector of $X$ has $\pi(G)\ge5$.
\end{itemize}
\end{theorem}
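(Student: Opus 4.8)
The plan is to push everything through the dictionary between coverings and tetrahedral flows, so that Theorem~\ref{thm:1-1} supplies the conclusion at the very end. Fix once and for all a tetrahedron $T$ in $PG(3,2)$. By Theorem~\ref{thm:1-1}, statement (ii) --- that $\pi(G)\ge 5$ --- is equivalent to $G$ admitting no $T$-flow. Hence it suffices to establish the single equivalence: $G$ \emph{admits} a $T$-flow if and only if $X$ is \emph{not} a decollineator, i.e.\ $X$ admits a collinear transition. Here $G$ is obtained from $X=X(I,O)$ with $I=\{g_1,g_2\}$ and $O=\{h_1,h_2\}$ by adjoining two new adjacent vertices $u,v$, where $u$ is incident with $g_1,g_2$ and the new edge $uv$, and $v$ is incident with $h_1,h_2$ and $uv$.

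The one genuinely substantive ingredient is the ``total flow'' identity: for any $T$-flow $\psi$ on a $(2,2)$-pole, summing Kirchhoff's law over all vertices in $\mathbb{F}_2^4$ makes every internal edge cancel, leaving $\psi(g_1)+\psi(g_2)=\psi(h_1)+\psi(h_2)$. Granting it, the ``only if'' direction is short. Given a $T$-flow $\phi$ on $G$, its restriction to $E(X)$ is a $T$-flow on $X$, because every vertex of $X$ retains its three incident edge-ends in $G$. At $u$ the values $\phi(g_1),\phi(g_2),\phi(uv)$ form a line of $T$; since a $T$-flow is a proper edge-colouring, $\phi(g_1)\ne\phi(g_2)$, so the pair $\{\phi(g_1),\phi(g_2)\}$ consists of two distinct points lying on a line of $T$ and is therefore collinear (shape $\ls$ or $\hl$). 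The same holds at $v$ for $\{\phi(h_1),\phi(h_2)\}$, so $\phi|_{E(X)}$ witnesses a collinear transition through $X$.

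For the ``if'' direction, let $\psi$ be a $T$-flow on $X$ realising a collinear transition, so $\{\psi(g_1),\psi(g_2)\}$ and $\{\psi(h_1),\psi(h_2)\}$ are collinear pairs of distinct points. By the total-flow identity their common sum $z:=\psi(g_1)+\psi(g_2)=\psi(h_1)+\psi(h_2)$ is nonzero, and being the sum of two distinct points on a line of $T$ it is exactly the third point of that line; thus $z\in P(T)$ and --- crucially --- the \emph{same} point $z$ completes $\{\psi(g_1),\psi(g_2)\}$ and $\{\psi(h_1),\psi(h_2)\}$ to lines of $T$. Extending $\psi$ by setting $\phi(uv):=z$ therefore gives a map $E(G)\to P(T)$ whose values form a line of $T$ at $u$, at $v$, and (unchanged) at every other vertex, i.e.\ a $T$-flow on $G$.

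It then remains only to chain things together: $X$ is a decollineator $\iff$ $X$ has no collinear transition $\iff$ (the two directions above) $G$ has no $T$-flow $\iff$ (Theorem~\ref{thm:1-1}) $\pi(G)\ge 5$. The main obstacle, such as it is, is invoking the total-flow identity correctly: it is precisely what forces a collinear transition through $X$ to have a common ``third point'' on both connectors, hence to be extendable across the edge $uv$; without it one could extend a collinear transition only in the lucky case that the two lines of $T$ shared their remaining vertex. A minor point worth noting is that $T$-flows are automatically nowhere-zero, since they take values in $P(T)$, and automatically proper, so no extra verification is needed for $G$.
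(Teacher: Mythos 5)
Your proof is correct: the reduction via Theorem~\ref{thm:1-1}, the restriction of a $T$-flow on $G$ to $X$ (forcing both connector pairs to be collinear), and the extension of a collinear transition across the edge $uv$ using the $\mathbb{F}_2^4$ boundary identity $\psi(g_1)+\psi(g_2)=\psi(h_1)+\psi(h_2)$ all hold up. Note that the present paper gives no proof of this statement at all --- it is quoted from the companion paper \cite[Theorem~5.4]{EMMS_P1} --- but your argument is the natural one and matches the intended reasoning, with the total-flow identity correctly identified as the key point that the same third point of $T$ completes both connector pairs to lines.
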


\section{A new family of graphs with $\pi\ge 5$}
\label{sec:construction}

In this section we present a new family of cubic graphs with
perfect matching index at least $5$. The reasons for high
perfect matching index of its members are quite different from
those found in the previously known families, the windmill
graphs \cite{EM}, the treelike snarks \cite{AKLM}, the snarks
of Chen \cite{Chen}, and in their common generalisation, the
Halin snarks, introduced in \cite{EMMS_P1}. While all Halin
snarks have circular flow number at least $5$ (see
\cite[Theorem~9.1]{EMMS_P1}), the new family contains an
infinite subfamily whose members have circular flow number at
most $4+\frac23$. The latter property will be established in
the next section.

Before proceeding to the construction we need a few definitions.

According to Theorem~\ref{thm:admiss-trans}, every $(2,2)$-pole
$X$ has $\T(X)\subseteq\mathcal{A}$, where $\mathcal{A}$ is the
set of admissible transitions defined by $(\ref{eq:A})$. Let
$\mathcal{L}$ be any subset of $\mathcal{A}$. A $(2,2)$-pole
$X$ will be called an \textit{$\mathcal{L}$-dipole} if
$\T(X)\subseteq\mathcal{L}$. For example, an
$\mathcal{L}$-dipole with $\mathcal{L}=\mathcal{A}-\{\hl\to\hl,
\ls\to\ls\}=\mathcal{D}$ is a decollineator and one where
$\mathcal{L}=\mathcal{A}-\{\ang\to\ang\}$ is a deangulator.

\medskip

The next two propositions prepare the building blocks for our
construction. As we shall see, they are decollineators of the
form $D_1\circ X \circ D_2$, where $D_1$ and $D_2$ are
arbitrary decollineators and $X$ is a deangulator with a
special transition relation.

\begin{proposition}\label{prop:deangQ}
Let $G$ be a cubic graph with $\pi(G)\ge 5$ containing two
$5$-cycles $C_1$ and $C_2$ such that $C_1\cap C_2$ is a path of
length $2$. Let $e$ and $f$ be the edges of $C_1\cup C_2$ that
are not incident with any vertex of $C_1\cap C_2$. Let $X$ be a
$(2,2)$-pole constructed from $G$ by severing $e$ and $f$ and
forming the connectors from the half-edges of the same edge.
Then each transition through $X$ has the form
$$\hl\to\hl, \quad \ls\to\ls, \quad \alt\to\alt,
\quad \text{and} \quad \ang\oto\ls.$$
\end{proposition}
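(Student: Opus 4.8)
The plan is to determine $\T(X)$ by combining the upper bound $\T(X)\subseteq\mathcal{A}$ supplied by Theorem~\ref{thm:admiss-trans}, where $\mathcal{A}$ is the set from $(\ref{eq:A})$, with two further exclusions that pare $\mathcal{A}$ down to exactly the five admissible transitions $\hl\to\hl$, $\ls\to\ls$, $\alt\to\alt$, $\ang\to\ls$, $\ls\to\ang$ listed in the statement. Concretely, it remains to show that none of $\dpt\to\dpt$, $\ax\to\ax$, $\ang\to\ang$ belongs to $\T(X)$. First I would set up notation for $C_1\cup C_2$: write the shared path $C_1\cap C_2$ as $a\,b\,c$ with $b$ central, so that $C_1=a\,b\,c\,v_1\,v_2$ and $C_2=a\,b\,c\,w_1\,w_2$, with $e=v_1v_2$ and $f=w_1w_2$; let $g_1,g_2$ be the half-edges of $e$ at $v_1,v_2$ and $h_1,h_2$ the half-edges of $f$ at $w_1,w_2$, these being the two connectors of $X$. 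The only structural facts I would use are that the three edges at $a$ are $av_2,ab,aw_2$, the three edges at $c$ are $bc,cv_1,cw_1$ (so $a$ and $c$ have all their edges inside $C_1\cup C_2$), and that $ab,bc,av_2,aw_2,cv_1,cw_1$ survive untouched in $X$.

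For the exclusion of $\dpt\to\dpt$ — the one place where the hypothesis $\pi(G)\ge5$ enters — I would note that a $T$-flow $\phi$ on $X$ realising such a transition has $\phi(g_1)=\phi(g_2)$ and $\phi(h_1)=\phi(h_2)$; performing the junctions $g_1*g_2$ and $h_1*h_2$ restores $e$ and $f$ with well-defined flow values and leaves the line-of-$T$ condition intact at every vertex, so $\phi$ becomes a $T$-flow on $G$, and Theorem~\ref{thm:1-1} gives the contradiction $\pi(G)\le4$.

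The heart of the argument is the simultaneous exclusion of $\ax\to\ax$ and $\ang\to\ang$, and I would base it on the observation that in either of these transitions both the input pair and the output pair consist of two midpoints of $T$. So suppose $\phi$ is a $T$-flow on $X$ with $\phi(g_1),\phi(g_2),\phi(h_1),\phi(h_2)$ all midpoints, and recall that the unique line of $T$ through a midpoint $p_i+p_j$ is $\{p_i,p_j,p_i+p_j\}$, which contains no other midpoint. Reading the line-of-$T$ condition at $v_1$ (where $cv_1$ and $g_1$ are among the three edges) forces $\phi(cv_1)$ to be a corner point; similarly $\phi(av_2)$, $\phi(cw_1)$, $\phi(aw_2)$ are corner points. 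The three values at $c$, namely $\phi(bc),\phi(cv_1),\phi(cw_1)$, must form a line of $T$; two of them are corner points, hence distinct corner points, so $\phi(bc)$ is the midpoint completing that line. The mirror argument at $a$, using $av_2,ab,aw_2$, makes $\phi(ab)$ a midpoint as well. But then $ab$ and $bc$ are two midpoint-valued edges meeting at $b$ whose values lie on a common line of $T$ — impossible. Hence neither $\ax\to\ax$ nor $\ang\to\ang$ is a transition through $X$.

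Putting these exclusions together with Theorem~\ref{thm:admiss-trans} gives $\T(X)\subseteq\{\hl\to\hl,\,\ls\to\ls,\,\alt\to\alt,\,\ang\to\ls,\,\ls\to\ang\}$, which is precisely the assertion that every transition through $X$ has one of the forms $\hl\to\hl$, $\ls\to\ls$, $\alt\to\alt$, $\ang\oto\ls$. I expect the only subtle point to be the recognition that $\ax\to\ax$ and $\ang\to\ang$ are exactly the ``two midpoints $\to$ two midpoints'' transitions: once that is seen, the length-$2$ overlap of the two $5$-cycles — forcing a midpoint onto each of the two edges at its central vertex $b$ — is what supplies the contradiction, while the $\dpt$ case is a routine re-gluing.
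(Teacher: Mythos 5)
Your proposal is correct and follows essentially the same route as the paper: exclude $\dpt\to\dpt$ by re-gluing to obtain a tetrahedral flow on $G$ (contradicting $\pi(G)\ge5$ via Theorem~\ref{thm:1-1}), and exclude $\ax\to\ax$ and $\ang\to\ang$ by showing that four midpoint values on the dangling edges force both edges of $C_1\cap C_2$ to carry weight-$2$ values although they are adjacent, then finish with Theorem~\ref{thm:admiss-trans}. Your write-up merely makes explicit the midpoint-forcing step that the paper delegates to Figure~\ref{fig:Q}.
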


\begin{proof}
Let $\{x,y\}\to\{x',y'\}$ be an arbitrary transition through
$X$, and let $\phi$ be a tetrahedral flow on $X$ that induces
it. First observe that $x\ne y$ for otherwise the Kirchhoff law
would imply that $x'=y'$ and hence $\phi$ would induce a
tetrahedral flow on $G$; this is impossible by
Theorem~\ref{thm:1-1}. Hence $X$ has no transition of the form
$\dpt\to\dpt$.  Next we show $X$ has no transition
$\{x,y\}\to\{x',y'\}$ where $|x|=|y|=|x'|=|y'|=2$.  Indeed, if
it had, then both edges contained in $C_1\cap C_2$ would be
forced to receive values of weight $2$ in spite of the fact
that they are adjacent (see Figure~\ref{fig:Q}). This excludes
from $\T(X)$ all admissible transitions involving an axis or an
angle except $\ang\oto\ls$. What remains are exactly those
transitions that are mentioned in the statement.
\end{proof}

\begin{figure}[htbp]
\begin{center}
     \scalebox{0.45}
     {
       \input{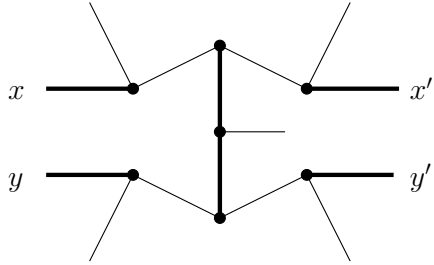}
     }
\end{center}
\caption{Excluding certain transitions in the proof of
Proposition~\ref{prop:deangQ}. Edges carrying a value of weight
$2$ are represented by bold lines.}\label{fig:Q}
\end{figure}

For the set of transitions mentioned in the statement of
Proposition~\ref{prop:deangQ} we put
\begin{align}\label{eq:Q}
\mathcal{Q}=\{\hl\to\hl, \ls\to\ls, \alt\to\alt, \ang\oto\ls \}.
\end{align}
Note that every $\mathcal{Q}$-dipole is a deangulator.

The crucial role in our construction is played by heavy
$(2,2)$-poles. We say that an edge $e$ of a multipole is
\emph{heavy} with respect to a tetrahedral flow $\phi$ if the
weight of $\phi(e)$ equals 2. A $(2,2)$-pole $X$ is
\emph{heavy} if it has at least two heavy dangling edges for
each tetrahedral flow.

\medskip

The following proposition offers a recipe for constructing
heavy $(2,2)$-poles.

\begin{proposition}\label{prop:DQD}
Let $D_1$ and $D_2$ be decollineators and let $Q$ be a
$\mathcal{Q}$-dipole, where $\mathcal{Q}$ is defined by
$(\ref{eq:Q})$. Then every transition through $D_1\circ Q\circ
D_2$ has the form
$$\ls\oto\ang, \quad \ang\to\ang,
\quad \text{and} \quad \alt\to\alt.$$
In particular, $D_1\circ Q\circ D_2$ is a heavy dipole.
\end{proposition}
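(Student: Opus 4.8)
The plan is to reduce the whole statement to a computation with the shape-transition relations of the three constituent dipoles. By the composition identity $\T(X_1\circ X_2)=\T(X_1)\circ\T(X_2)$ recorded in Section~\ref{sec:theory}, applied twice, $\T(D_1\circ Q\circ D_2)=\T(D_1)\circ\T(Q)\circ\T(D_2)$. Since $D_1$ and $D_2$ are decollineators, $\T(D_i)\subseteq\mathcal{D}$ by $(\ref{eq:D})$; since $Q$ is a $\mathcal{Q}$-dipole, $\T(Q)\subseteq\mathcal{Q}$, where by $(\ref{eq:Q})$ the relation $\mathcal{Q}$ consists of $\hl\to\hl$, $\ls\to\ls$, $\alt\to\alt$, $\ang\to\ls$ and $\ls\to\ang$ (unpacking $\ang\oto\ls$). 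Relation composition is monotone, so it suffices to show $\mathcal{D}\circ\mathcal{Q}\circ\mathcal{D}\subseteq\{\ls\to\ang,\ang\to\ls,\ang\to\ang,\alt\to\alt\}$, which gives the first assertion at once.

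I would then carry out the composition in two mechanical steps. First compute $\mathcal{D}\circ\mathcal{Q}$: for each transition $\mathtt{s}\to\mathtt{t}$ of $\mathcal{Q}$, prepend every transition of $\mathcal{D}$ whose target equals $\mathtt{s}$. Because $\mathcal{D}$ has no transition with target $\hl$, the pair $\hl\to\hl$ drops out, and a brief check of the remaining four pairs yields $\mathcal{D}\circ\mathcal{Q}=\{\alt\to\alt,\ang\to\ls,\ang\to\ang,\ls\to\ls\}$. Then compose on the right with $\mathcal{D}$, appending to each surviving transition every $\mathcal{D}$-transition whose source equals its target: the only $\mathcal{D}$-transition out of $\ls$ is $\ls\to\ang$, which turns the collinear transition $\ls\to\ls$ into $\ls\to\ang$ and turns $\ang\to\ls$ into $\ang\to\ang$; the two $\mathcal{D}$-transitions $\ang\to\ang$ and $\ang\to\ls$ out of $\ang$ absorb the contribution of $\ang\to\ang$; and $\alt\to\alt$ reproduces itself. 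The outcome is $\mathcal{D}\circ\mathcal{Q}\circ\mathcal{D}=\{\alt\to\alt,\ang\to\ang,\ang\to\ls,\ls\to\ang\}$, which is exactly $\{\ls\oto\ang,\ang\to\ang,\alt\to\alt\}$.

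For heaviness I would simply read off the weights of the shapes that survive. A dangling edge is heavy precisely when its flow-value is a midpoint (weight $2$). A point pair of shape $\ang$ is a pair of two midpoints, so both corresponding dangling edges are heavy; a pair of shape $\alt$ is a corner point together with a midpoint, so exactly one of its dangling edges is heavy; a pair of shape $\ls$ contributes no heavy dangling edge. By the first part, every tetrahedral flow on $D_1\circ Q\circ D_2$ induces on its two connectors a transition of one of the types $\ls\to\ang$, $\ang\to\ls$, $\ang\to\ang$, $\alt\to\alt$, so the input and output connectors together carry $0+2$, $2+0$, $2+2$, or $1+1$ heavy dangling edges, i.e.\ at least two in every case. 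Since the flow was arbitrary, $D_1\circ Q\circ D_2$ is heavy.

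I expect the only delicate point to be keeping the double composition honest, specifically making sure that the collinear transition $\ls\to\ls$ appearing in the intermediate relation $\mathcal{D}\circ\mathcal{Q}$ really is eliminated on composing with $\mathcal{D}$ on the right; this works because $\mathcal{D}$ offers no outgoing transition from $\ls$ other than $\ls\to\ang$. Everything else is a direct appeal to the composition identity, the defining containments $\T(D_i)\subseteq\mathcal{D}$ and $\T(Q)\subseteq\mathcal{Q}$, and the elementary weight count for the six shapes.
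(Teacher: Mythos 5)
Your proof is correct and follows the paper's argument: the paper likewise reduces the statement to the containment $\T(D_1\circ Q\circ D_2)\subseteq\mathcal{D}\circ\mathcal{Q}\circ\mathcal{D}$ via the composition identity and then checks heaviness by weights, merely leaving the relation computation and the weight count implicit where you spell them out. Your intermediate computation $\mathcal{D}\circ\mathcal{Q}=\{\alt\to\alt,\ang\to\ls,\ang\to\ang,\ls\to\ls\}$, the final set $\{\ls\oto\ang,\ang\to\ang,\alt\to\alt\}$, and the heavy-edge counts $2,2,4,2$ are all accurate.
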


\begin{proof}
Since $D_1$ and $D_2$ are decollineators, we  have
$\T(D_1)\subseteq\mathcal{D}$ and
$\T(D_2)\subseteq\mathcal{D}$. It follows that $\T(D_1\circ
Q\circ D_2) \subseteq
\mathcal{D}\circ\mathcal{Q}\circ\mathcal{D}$, which leaves the
transitions listed in the statement. It is straightforward to
check that the remaining transitions indeed make $D_1\circ
Q\circ D_2$ a heavy dipole.
\end{proof}

For the set of transitions mentioned in the statement of
Proposition~\ref{prop:DQD} we put
\begin{align}\label{eq:R}
\mathcal{R}=\mathcal{D}\circ\mathcal{Q}\circ\mathcal{D}
=\{\ls\oto\ang,\ang\to\ang, \alt\to\alt\}.
\end{align}
Note that every $\mathcal{R}$-pole is a heavy decollineator.

\begin{remark}\label{rem:PetersenQ} {\rm
Let $D_1=D_2=D_{Ps}$ where $D_{Ps}$ is the decollineator
obtained from the Petersen graph by removing two adjacent
vertices and including two dangling edges in the same connector
whenever they were formerly incident with the same vertex. It
can be verified that $\T(D_{Ps})=\mathcal{D}$. Let $Q_{Ps}$ be
the $(2,2)$-pole arising from the Petersen graph by severing
two edges at distance $2$. It is easy to see that $Q_{Ps}$
satisfies the assumptions of Propsition~\ref{prop:deangQ}, so
$\T(Q_{Ps})\subseteq \mathcal{Q}$ where $\mathcal{Q}$ is the
transition set defined by $(\ref{eq:Q})$. It is not difficult
to verify that in fact $\T(Q_{Ps})=\mathcal{Q}$. Furthermore,
by Proposition~\ref{prop:DQD} $\T(D_{Ps}\circ Q_{Ps}\circ
D_{Ps})\subseteq\mathcal{R}$. Again, it can be checked that
$\T(D_{Ps}\circ Q\circ D_{Ps})=\mathcal{R}$. Thus $D_{Ps}\circ
Q\circ D_{Ps}$ is a heavy dipole.}
\end{remark}

Now we are ready for the construction of a new family of cubic
graphs with $\pi\ge 5$.

\medskip \noindent \textbf{Construction.} Let $G$ be a cubic
graph. A new graph $\tilde G$ is constructed as follows.
\begin{itemize}
\item Replace every vertex $v$ of $G$ with a pair of
    independent vertices $v_1$ and $v_2$ in such a way that
    $\{u_1,u_2\}\cap\{w_1,w_2\}=\emptyset$ whenever $u\ne
    w$. Any vertex $v_i$ of $\tilde{G}$,
    where $v$ is a vertex of $G$ and $i\in\{1,2\}$, is called a
    \textit{lift} of $v$.
\item Replace each edge $e$ of $G$ with a heavy $(2,2)$-pole
    $X_e=X_e(I,O)$ in such a way that
    for any two distinct edges $f$ and $h$ the dipoles
    $X_f$ and $X_h$ are disjoint. The dipoles $X_e$ are
    called \textit{superedges}.
\item For each edge $e=uv$ attach the dangling edges of the
    input connector of $X_e$ to distinct vertices in
    $\{u_1,u_2\}$ and those in the output connector to
    distinct vertices in $\{v_1,v_2\}$.
\end{itemize}
The construction of the graph $\tilde G$ can be regarded as a
special form of superposition. We call $\tilde G$ a
\textit{heavy superposition} of $G$. A heavy superposition is
said to be \textit{basic} if each heavy dipole $X_e$ used for
the construction is isomorphic to the dipole $D_{Ps}\circ
Q_{Ps}\circ D_{Ps}$ from Remark~\ref{rem:PetersenQ}.

\medskip

The following theorem is the main result of this section.

\begin{theorem}\label{thm:R-superpos}
Let $G$ be a cubic graph and let $\tilde G$ be a
heavy superposition of $G$. Then $\pi(\tilde G)\ge 5$.
\end{theorem}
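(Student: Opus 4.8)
The strategy is to argue by contradiction: suppose $\tilde G$ admits a covering by four perfect matchings. By Theorem~\ref{thm:1-1}, this is equivalent to the existence of a tetrahedral flow $\phi$ on $\tilde G$ with respect to some fixed tetrahedron $T$ in $PG(3,2)$. The key observation is that such a $\phi$, when restricted to the superedges $X_e$, induces a tetrahedral flow on each $X_e$, and hence a transition between the shapes appearing on its input and output connectors. The global structure of $\tilde G$ then forces these transitions to be compatible at each pair of lifts $v_1, v_2$ replacing a vertex $v$ of $G$, and the ``heaviness'' of the superedges will be exactly what produces the contradiction.

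First I would set up the local picture at a single vertex $v$ of $G$ of degree $3$, with incident edges $e$, $f$, $h$. In $\tilde G$ these are replaced by superedges $X_e$, $X_f$, $X_h$, and the three connectors adjacent to $v$ (one from each superedge) are attached to the two lifts $v_1, v_2$ so that each connector contributes one dangling edge to $v_1$ and one to $v_2$ — wait, this needs care: each connector has two dangling edges, and they go to \emph{distinct} vertices among $\{v_1, v_2\}$, so each of $v_1, v_2$ receives exactly one dangling edge from each of the three connectors, making $v_1$ and $v_2$ cubic. Now $\phi$ assigns to the six relevant dangling edges values in $P(T)$, with the constraint (Kirchhoff/line-of-$T$) that the three values at $v_1$ form a line of $T$ and likewise at $v_2$. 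Since each superedge is heavy, for the flow $\phi$ each $X_e$ has at least two heavy dangling edges; I would argue that for an $(m,m)$-pole with $m=2$ this means \emph{both} dangling edges of at least one connector are heavy, or possibly a split — I need to check that in either case the total flow through the superedge is forced to be nonzero, or more precisely track the weights. Actually the cleaner route: a heavy $(2,2)$-pole has at least two heavy dangling edges among its four, and combined with the admissible transitions from Theorem~\ref{thm:admiss-trans} (and the fact that for a basic superposition $\T(X_e) \subseteq \mathcal{R}$ by Proposition~\ref{prop:DQD}, so every transition is $\ls\leftrightarrow\ang$, $\ang\to\ang$, or $\alt\to\alt$), every connector of $X_e$ carries at least one heavy edge — check: in $\ang$ both points are midpoints (weight $2$), in $\ls$ both are corners (weight $1$), in $\alt$ one of each; so an $\ls\to\ang$ transition has a light connector on the $\ls$ side. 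Hmm, so heaviness does not immediately give a heavy edge in every connector. Let me reconsider: the definition only promises two heavy dangling edges total; the real content must be a counting/parity argument across the whole graph.

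So the main step, and the main obstacle, is a global counting argument. I would count, over all superedges $e \in E(G)$, the number of heavy dangling edges, using heaviness to get the lower bound $\sum_e (\text{heavy dangling edges of } X_e) \ge 2|E(G)|$. On the other hand, at each lift $v_i$ the three incident dangling edges receive values forming a line of $T$, and a line of $T$ contains either zero or two midpoints (the lines of $T$ are $\{c_1, c_2, c_1+c_2\}$ — two corners and one midpoint, so exactly \emph{one} midpoint, weight-$2$ point, per line; wait, $c_1+c_2$ is the midpoint, $c_1, c_2$ are corners, so each line of $T$ has exactly one heavy point). Therefore at each lift $v_i$, exactly one of the three incident dangling edges is heavy. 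Summing over all $2|V(G)|$ lifts gives exactly $2|V(G)|$ heavy dangling-edge-incidences, and since each dangling edge of a superedge is incident with exactly one lift, the total number of heavy dangling edges of all superedges is exactly $2|V(G)| = |E(G)|$ (using $3|V(G)| = 2|E(G)|$ for cubic $G$, wait: $2|V| \ne |E|$ in general; $3|V|=2|E|$ so $|E| = \tfrac32|V|$, hence $2|V| = \tfrac43|E| > |E|$ — let me recompute). Actually: number of heavy dangling-edge-incidences at lifts $= 2|V(G)| \cdot 1 = 2|V(G)|$; each superedge has $4$ dangling edges, $2|E(G)|$ dangling edges total, each incident to exactly one lift, and the heavy ones number $2|V(G)|$. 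But heaviness demands at least $2$ heavy dangling edges per superedge, i.e.\ at least $2|E(G)|$ heavy dangling edges. So we need $2|V(G)| \ge 2|E(G)|$, i.e.\ $|V(G)| \ge |E(G)|$, which is \emph{false} for cubic $G$ (where $|E| = \tfrac32|V|$). This contradiction completes the proof. I would write this up carefully: the potential pitfall is the edge case where $G$ has multiple edges or loops, or vertices of degree less than $3$ — but the construction presumes $G$ cubic, and for loops/parallel edges the lift/superedge incidence count is unaffected. I would also double-check the claim ``each line of $T$ has exactly one midpoint'': a line is $\{c_1, c_2, c_1+c_2\}$ where $c_1, c_2$ are among the corner points $p_1,\dots,p_4$; if instead the line is of the form $\{p_1+p_2, p_3+p_4, p_1+p_2+p_3+p_4\}$ — but that last vector is not a point of $T$ (it has no representation as a corner or a sum of two corners among the six lines of $T$), so such triples are \emph{not} lines of $T$; the six lines of $T$ are precisely those through pairs of corners, each with one corner-free midpoint. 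Hence the count ``exactly one heavy edge per lift'' is correct, and the argument is complete.
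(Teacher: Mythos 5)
Your proposal is correct and, after the exploratory detours (the per-connector heaviness attempt and the appeal to $\mathcal{R}$ are unnecessary and are rightly abandoned), it settles on exactly the paper's argument: each lift sees a line of $T$ and hence exactly one heavy edge, giving precisely $2|V(G)|$ heavy dangling edges, while heaviness of the superedges forces at least $2|E(G)|=3|V(G)|$, a contradiction. The paper phrases this same count as an average of $\tfrac{2n}{m}=\tfrac43<2$ heavy dangling edges per superedge, so no substantive difference remains.
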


\begin{proof}
Assume that $G$ is a cubic graph with $n$ vertices and $m$
edges; clearly, $m=3n/2$. Suppose to the contrary that
$\pi(\tilde G)\le 4$. By Theorem~\ref{thm:1-1}, $\tilde G$
admits a tetrahedral flow, say  $\phi$. Let $W$ denote the set
of all edges of $\tilde{G}$ that are incident with some lift of
a vertex of $G$. Since each lift $v_i$ is incident with exactly
one heavy edge with respect to $\phi$, there are exactly $2n$
heavy edges in $W$. On the other hand, the set $W$ can be
decomposed into $m$ sets $W_e$ according to which superedge
$X_e$ they belong to as their dangling edges. By counting the
the average number of heavy dangling edges per superedge we
obtain
$$\frac{2n}{m}=\frac43<2.$$
This inequality implies that there exists a superedge $X_e$
with fewer than two heavy dangling edges, contradicting the
assumption that all superedges used for the construction of
$\tilde G$ are heavy.
\end{proof}

\section{Circular flows vs. perfect matching index}
\label{sec:flows}

In the preceding section have proved that a heavy superposition
$\tilde G$ of any cubic graph $G$ has perfect matching index at
least $5$. We now show that if the superposition is basic and
$G$ is $3$-edge-colourable, then the circular flow number of
$\tilde G$ is smaller than $5$.

We continue with the pertinent definitions. Given a real number
$r\ge 2$, we define a \textit{nowhere-zero real-valued
$r$-flow} as an $\mathbb{R}$-flow $\phi$ such that $1\le
|\phi(e)|\le r-1$   for each edge $e$ of $G$. A
\textit{nowhere-zero modular $r$-flow} is an
$\mathbb{R}/r\mathbb{Z}$-flow $\phi$ such that $1\le \phi(e)
\pmod r \le r-1$ for each edge $e$. The symbol $x \pmod r$
denotes the unique real number $x'\in[0,r)\subseteq\mathbb{R}$
such that $x-x'$ is a multiple of $r$. It is well known that a
graph admits a nowhere-zero real-valued $r$-flow if and only if
it admits a nowhere-zero modular $r$-flow.

The \textit{circular flow number} of a graph $G$, denoted by
$\Phi_c(G)$, is the infimum of the set of all real numbers $r$
such that $G$ has a nowhere-zero $r$-flow. It is known
\cite{Goddyn} that the \textit{circular flow number} of a
finite graph is in fact a minimum and a rational number.

\medskip

Here is the main result of this section.

\begin{theorem} \label{thm:cfn}
If $\tilde G$ is a basic heavy superposition of a
$3$-edge-colourable cubic graph $G$, then  $\tilde G$ is a
nontrivial snark  and
$$4+ \frac{1}{2}<\Phi_c(\tilde G)\le 4+ \frac{2}{3}.$$
\end{theorem}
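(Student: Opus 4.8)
The statement has three parts: $\tilde G$ is a snark, $\tilde G$ is nontrivial (cyclically $4$-edge-connected, girth $\ge 5$), and the two flow-number bounds. The first part is immediate: by Theorem~\ref{thm:R-superpos}, $\pi(\tilde G)\ge 5$, so in particular $\tilde G$ is not $3$-edge-colourable, and since the construction plainly yields a $2$-connected cubic graph, $\tilde G$ is a snark. For nontriviality I would examine the structure of $\tilde G$ directly: each superedge $X_e$ is a copy of $D_{Ps}\circ Q_{Ps}\circ D_{Ps}$, built from three Petersen fragments, so its internal girth is $5$ and the only short cycles passing through a lift $v_i$ go around inside a single superedge; a careful but routine local analysis shows girth $\ge 5$. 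For cyclic $4$-edge-connectivity I would argue that any cycle-separating edge cut of size $\le 3$ would have to either sit inside some $X_e$ (impossible, since the Petersen-based pieces are well connected) or correspond to a small edge cut in $G$ together with the $2$-edge connectors between lifts and superedges; tracking how a $k$-edge cut of $\tilde G$ projects to $G$ and using that $G$ is cubic and (we may assume, after discarding trivial cases) has no small cuts yields the bound. I expect this connectivity bookkeeping to be the most tedious part, though not conceptually hard.

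\textbf{The flow bounds.} The lower bound $\Phi_c(\tilde G) > 4+\tfrac12$ should follow from $\pi(\tilde G)\ge 5$ together with the known fact (attributable to the relation between flows and matchings, or to parity/edge-colouring obstructions) that a cubic graph with $\Phi_c \le 4+\tfrac12$ admits a $3$-edge-colouring-like decomposition forcing $\pi\le 4$; more concretely, I would invoke that $\Phi_c(\tilde G) \le 4 + \tfrac12$ would contradict $\pi(\tilde G)\ge 5$ (a cubic graph with circular flow number at most $9/2$ has perfect matching index $\le 4$). Since $\tilde G$ is not $3$-edge-colourable we also know $\Phi_c(\tilde G) > 4$, and the sharper $> 4 + \tfrac12$ comes from this matching obstruction. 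For the upper bound I would construct an explicit nowhere-zero $(4+\tfrac23)$-flow, i.e.\ a nowhere-zero modular $\tfrac{14}{3}$-flow, on $\tilde G$. The idea: start from a proper $3$-edge-colouring of $G$, which gives a nowhere-zero $4$-flow (equivalently a $\mathbb{Z}_2\times\mathbb{Z}_2$-flow) on $G$; then design, for the basic superedge $D_{Ps}\circ Q_{Ps}\circ D_{Ps}$, a family of ``plug-in'' flows parametrised by the colour of the edge $e$ it replaces, each using only values whose absolute value lies in $[1, \tfrac{11}{3}]$ and with the correct total flow through the dipole to match up with the adjacent superedges at the lifts $v_1, v_2$. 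Assembling these local flows according to the $3$-edge-colouring of $G$ produces a global nowhere-zero $\tfrac{14}{3}$-flow on $\tilde G$.

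\textbf{Where the work concentrates.} The crux is the explicit local flow on the superedge: I must find, inside $D_{Ps}\circ Q_{Ps}\circ D_{Ps}$, an orientation and an assignment of reals in $[1,\tfrac{11}{3}]$ obeying Kirchhoff's law at every internal vertex, such that the two input dangling edges carry a prescribed pair of values and likewise for the output, with these boundary values chosen consistently across a $3$-edge-colouring of $G$. The natural approach is to work modulo $\tfrac{14}{3}$ and to pick boundary values from a small set of residues (for instance values near $\tfrac73$ together with $1$ and $\tfrac{11}{3}$) tied to the three colour classes; then a finite check — possibly computer-assisted, in the spirit of the verifications in Remark~\ref{rem:PetersenQ} — confirms that each Petersen fragment admits a compatible internal flow. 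One must also verify that the total flow through each superedge is what the projected $4$-flow on $G$ demands at the corresponding edge, and that nothing forces a value outside the allowed band; this, plus the observation that $\tilde G$ being a snark already gives $\Phi_c(\tilde G) > 4$ so that the interval $(4+\tfrac12, 4+\tfrac23]$ is non-degenerate, completes the argument. The hardest single step is exhibiting the $\tfrac{14}{3}$-flow on the superedge with the right boundary behaviour; everything else is assembly and the connectivity/girth bookkeeping.
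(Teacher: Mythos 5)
The central gap is in your lower bound. You derive $\Phi_c(\tilde G)>4+\tfrac12$ from the ``known fact'' that a cubic graph with circular flow number at most $\tfrac92$ has perfect matching index at most $4$. No such theorem is available: whether \emph{any} constant $c>4$ has the property that $\pi(G)\ge 5$ forces $\Phi_c(G)\ge c$ is precisely the open subproblem raised after Problem~\ref{prob:infimum}, and your claimed fact (with $c=\tfrac92$) would settle it. The implication you do have, namely that $\tilde G$ is not $3$-edge-colourable, only yields $\Phi_c(\tilde G)>4$. The paper's lower bound is instead a structural argument tied to the construction: assuming $\Phi_c(\tilde G)\le 4+\tfrac12$, Steffen's theorem \cite{Steffen-cfn} provides a modular $(4+\tfrac12)$-flow all of whose values are halves of integers; since the Petersen graph has circular flow number $5$, the total flow through each copy of $D_{Ps}$ lies in $(-1,1)$ and the total flow through $Q_{Ps}$ is nonzero, so the total flow through every superedge is forced to be $\pm\tfrac12$; Kirchhoff's law at a lift pair $\{v_1,v_2\}$ then requires three values from $\{\pm\tfrac12\}$ to sum to zero, a contradiction. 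Without an argument of this kind (or a proof of your invoked fact), the strict inequality $\Phi_c(\tilde G)>4+\tfrac12$ is unsupported.

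Two further points. Your upper-bound plan coincides with the paper's (plug-in flows on superedges indexed by the colour classes of a $3$-edge-colouring of $G$, equivalent to an integer $12$-flow with all values of absolute value at least $3$, scaled by $\tfrac13$), but the actual existence of such local flows on $D_{Ps}\circ Q_{Ps}\circ D_{Ps}$ with compatible boundary values is exactly the content you defer; the paper supplies these flows explicitly, so your proposal is a correct outline but not yet a proof of the bound $\Phi_c(\tilde G)\le 4+\tfrac23$. Finally, in the connectivity part you ``discard trivial cases'' and assume $G$ has no small cuts; this is not permissible, since the theorem is stated for every $3$-edge-colourable cubic $G$ (the smallest example in the paper starts from the graph on two vertices with three parallel edges). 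No assumption on $G$ is needed: a cycle-separating cut of $\tilde G$ cannot lie inside a single superedge, hence meets at least two superedges and uses at least two edges in each, giving $|S|\ge 4$ directly.
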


\begin{proof}
To prove the lower bound suppose, to the contrary, that
$\Phi_c(\tilde G)\le 4+ \frac{1}{2}$. It means that $\tilde G$
has a nowhere-zero $(4+ \frac{1}{2})$-flow. In fact, by
Theorem~1.1 of \cite{Steffen-cfn}, $\tilde G$ has a
nowhere-zero $(4+ \frac{1}{2})$-flow such that every flow value
is a rational number of the form $n/2$ for some integer~$n$.
Let $\phi$ be such a flow. Clearly, $\phi$ can be taken to be a
modular $(4+\frac{1}{2})$-flow. Fix a vertex $v$ in $G$ and let
$e_1$, $e_2$, and $e_3$ be the three edges from $G$ incident
with $v$. Consider the total flow through the dipole $X_{e_i}$
in the direction from the set $\{v_1,v_2\}$, that is, the sum
of flow-values in $\mathbb{R}/(4+\frac12)\mathbb{Z}$ on the
dangling edges incident with $\{v_1,v_2\}$ directed towards the
dipole; let $h_i$ be the value. As the circular flow number of
the Petersen graph is 5, the total flow through $D_{Ps}$ lies
in the interval $(-1,1)$ modulo $4+\frac12$. and the total flow
through $Q_{Ps}$ is nonzero. Since the dipoles $D_{Ps}$ and
$Q_{Ps}$ are sequentially composed in the superedge, the total
flow through the superedge belongs to $(-1,0)\cup (0,1)$.
Taking into account the fact that the flow values are nonzero
multiples of $\frac12$, we conclude that the total flow through
any superedge is either $\frac12$ or $-\frac12$. In particular,
$h_1$, $h_2$, and $h_3$ are all in $\{-\frac12,\frac12\}$. By
the Kirchhoff law, the total outflow from the vertices $v_1$
and $v_2$ is zero, so $h_1+h_2+h_3=0$, which is clearly
impossible. Therefore $\Phi_c(\tilde G)>4+\frac12$, as claimed.

In order to establish the upper bound we construct a
nowhere-zero $(4+\frac23)$-flow on~$\tilde{G}$. To this end, it
is suffcient to find an integer $12$-flow $\phi$ such that
$|\phi(e)|\ge 3$ for each edge $e$ of $\tilde G$ and then
divide all the values by $3$. Let $\{P_1,P_2,P_3\}$ be a
1-factorisation of $G$ induced by a 3-edge-colouring. If an
edge $e$ of $G$ belongs to $P_1$, we assign values to the edges
of the corresponding superedge $X_e$ according to
Figure~\ref{fig:fsA}; similarly, if an edge belongs to $P_2$ or
$P_3$, the flow values in $X_e$ will be assigned according to
Figure~\ref{fig:fsB} and Figure~\ref{fig:fsC}, respectively. It
is easy to check that the resulting valuation and orientation
constitute an integer $12$-flow with absolute value not smaller
than $3$ on each edge of $\tilde G$. This gives rise to a
nowhere-zero $(4+\frac23)$-flow on~$\tilde{G}$.

\begin{figure}[htbp]
  \centerline{
   \scalebox{0.6}{
     \input{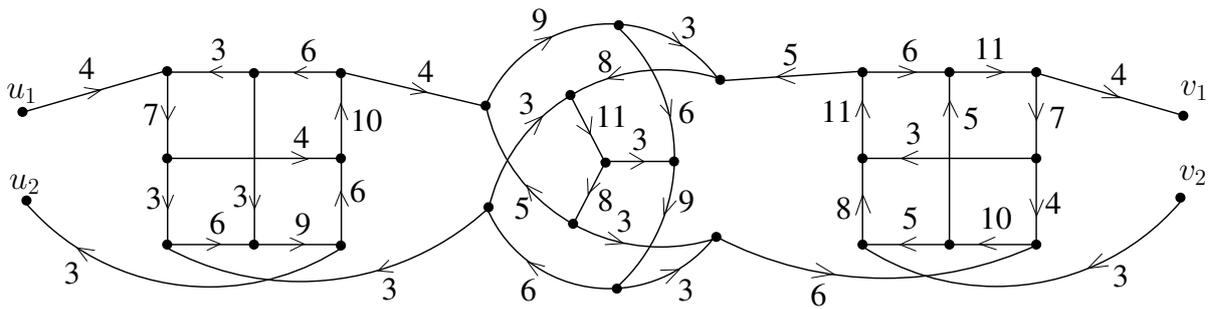}
   }
  }
   \caption{A $12$-flow on the superedges corresponding to $P_1$}
   \label{fig:fsA}
\end{figure}

\begin{figure}[htbp]
  \centerline{
   \scalebox{0.6}{
     \input{flow_supB}
   }
  }
   \caption{A $12$-flow on the superedges corresponding to $P_2$}
   \label{fig:fsB}
\end{figure}

\begin{figure}[htbp]
  \centerline{
   \scalebox{0.6}{
     \input{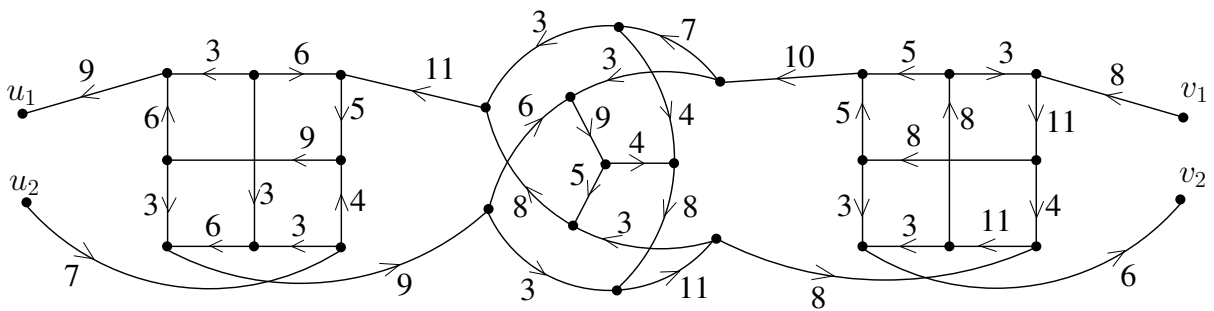}
   }
  }
   \caption{A $12$-flow on the superedges corresponding to $P_3$}
   \label{fig:fsC}
\end{figure}

At last, we show that $\tilde{G}$ is a nontrivial snark. First
note that $\tilde G$ is not $3$-edge-colourable because
$\pi(\tilde G)\ge 5$ by Theorem~\ref{thm:R-superpos}.
Furthermore, the girth of $\tilde{G}$ is obviously~5. Thus it
remains to prove that $\tilde{G}$ is cyclically
$4$-edge-connected. Take an arbitrary cycle-separating edge-cut
$S$ in $\tilde G$. It is clear that the edges of $S$ cannot all
belong to the same superedge. Therefore $S$ intersects at least
two superedges, and in each intersected superedge it has at
least two edges. Thus $|S|\ge 4$, implying that $\tilde G$ is
cyclically $4$-edge-connected. Summing up, $\tilde{G}$ is a
nontrivial snark.
\end{proof}

Note that the proof of the lower bound in Theorem~\ref{thm:cfn}
is valid for all cubic graphs $G$, not necessarily
$3$-edge-colourable ones. Furthermore, the restriction to a
basic superposition is also superfluous.

\medskip

The following statement is an immediate consequence of
Theorems~\ref{thm:R-superpos} and~\ref{thm:cfn}.

\begin{corollary}
There exist infinitely many nontrivial snarks with $\pi\ge5$
and $\Phi_c<5$.
\end{corollary}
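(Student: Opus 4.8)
The plan is to feed an explicit infinite supply of inputs into Theorems~\ref{thm:R-superpos} and~\ref{thm:cfn}. First I would fix an infinite family $\{G_k\}_{k\ge 2}$ of $3$-edge-colourable cubic graphs with $|V(G_k)|\to\infty$; a convenient choice is the prism on $4k$ vertices, consisting of two disjoint $2k$-cycles joined by a perfect matching of ``rungs''. Each such prism is cubic and admits the obvious proper $3$-edge-colouring: colour every rung with the third colour and alternate the remaining two colours along each of the two even cycles. For each $k$ I would then form the \emph{basic} heavy superposition $\tilde G_k$ exactly as in the Construction of Section~\ref{sec:construction}, replacing every vertex of $G_k$ by two lifts and every edge by a copy of the heavy dipole $D_{Ps}\circ Q_{Ps}\circ D_{Ps}$ of Remark~\ref{rem:PetersenQ}.

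Next I would invoke the two theorems in turn. By Theorem~\ref{thm:R-superpos}, since $\tilde G_k$ is a heavy superposition of the cubic graph $G_k$, we obtain $\pi(\tilde G_k)\ge 5$ with no further argument. By Theorem~\ref{thm:cfn}, since the superposition is basic and $G_k$ is $3$-edge-colourable, $\tilde G_k$ is a nontrivial snark and satisfies $\Phi_c(\tilde G_k)\le 4+\frac{2}{3}<5$. Hence every member of the family $\{\tilde G_k\}$ is simultaneously a nontrivial snark with $\pi\ge 5$ and with $\Phi_c<5$.

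It remains only to observe that the family contains infinitely many pairwise non-isomorphic graphs. The number of vertices of $\tilde G_k$ is a strictly increasing linear function of $|V(G_k)|$: two lifts per vertex of $G_k$, together with a fixed number of internal vertices per superedge (and $|E(G_k)|=3|V(G_k)|/2$). Consequently $|V(\tilde G_k)|\to\infty$, so infinitely many of the $\tilde G_k$ are distinct, and the corollary follows. I expect no genuine obstacle here: the statement is essentially bookkeeping, the only points requiring a line of verification being the existence of an infinite family of $3$-edge-colourable cubic graphs and the fact that the heavy superposition construction does not collapse them to finitely many graphs, both immediate from the order count above.
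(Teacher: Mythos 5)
Your proposal is correct and follows essentially the same route as the paper, which simply derives the corollary as an immediate consequence of Theorems~\ref{thm:R-superpos} and~\ref{thm:cfn}; you merely make explicit the bookkeeping (an infinite supply of $3$-edge-colourable cubic graphs and the growing order of the superpositions) that the paper leaves implicit.
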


\begin{remark}{\rm
The smallest example with $\pi\ge 5$ and $\Phi_c<5$ which
arises from our construction has 82 vertices. It is constructed
by a basic heavy superposition from the cubic graph consisting
of two vertices and three parallel edges. }
\end{remark}

We have shown, contrary to some expectations, that there exist
cubic graphs, even nontrivial snarks, with $\pi\ge 5$ and
$\Phi_c<5$. It is natural to ask how small the parameter
$\Phi_c$ can be within the family of cubic graphs that cannot
be covered with four perfect matchings. We therefore propose
the following problem.

\begin{problem}\label{prob:infimum}
What is the infimum of the set of all real numbers $r$ such
that there exists a cubic graph $G$ with $\pi(G)\ge5$ and
$\Phi_c(G)=r$?
\end{problem}

An interesting subproblem of Problem~\ref{prob:infimum} is to
determine whether there exists a constant $c>4$ such that every
cubic graph $G$ with $\pi(G)\ge 5$ has $\Phi_c(G)\ge c$.


\end{document}